\documentclass[11pt,leqno]{amsart}
\usepackage{amsmath,amsthm,amssymb}
\usepackage{tabularx}
\usepackage{comment}
\usepackage{url} 

\newcommand{\GF}{{\mathbb F}}
\newcommand{\FF}{{\mathbb F}}

\newcommand{\R}{{\mathbb R}}
\newcommand{\RR}{{\mathbb R}}

\newcommand{\NN}{{\mathbb N}}

\newcommand{\Aut}{{\rm Aut}}

\newcommand{\wt}{{\rm wt}}
\newcommand{\supp}{{\rm supp}}

\DeclareMathOperator{\Harm}{Harm}


\usepackage{color} 
\usepackage{ulem}

\newtheorem{thm}{Theorem}[section]
\newtheorem{lem}[thm]{Lemma}
\newtheorem{cor}[thm]{Corollary}

\theoremstyle{definition}
\newtheorem{df}[thm]{Definition}
\newtheorem{rem}[thm]{Remark}
\newtheorem{ex}[thm]{Example}

\numberwithin{equation}{section}

\title[A note on $t$-designs in isodual codes]
{A note on $t$-designs in isodual codes}

\author[Awada]{Madoka Awada}
\address{School of Fundamental Science and Engineering, 
Waseda University, 
Tokyo 169--8555, Japan
}
\email{madoka-awada@fuji.waseda.jp} 
\author[Miezaki]{Tsuyoshi Miezaki*}
\thanks{*Corresponding author}
\address{		Faculty of Science and Engineering, 
		Waseda University, 
		Tokyo 169--8555, Japan
}
\email{miezaki@waseda.jp} 
\author[Munemasa]{Akihiro Munemasa}
\address{Graduate School of Information Sciences, Tohoku University,
Tohoku 980-8579, Japan}
\email{munemasa@math.is.tohoku.ac.jp}

\author[Nakasora]{Hiroyuki Nakasora}
\address{Faculty of Computer Science and Systems Engineering, 
Okayama Prefectural University, 
Okayama 719-1197, Japan}
\email{nakasora@cse.oka-pu.ac.jp}


\keywords{extended quadratic residue code, 
combinatorial $t$-design, 
Jacobi polynomial, harmonic weight enumerator
}

\subjclass[2010]{Primary 94B05; Secondary 05B05}

\begin{document}
\begin{abstract}
In the present paper, 
we construct 3-designs using 
extended binary quadratic residue codes and their dual codes. 
\end{abstract}
\maketitle


\section{Introduction}

Let $C$ be a code and 
$C_\ell:=\{c\in C\mid \wt(c)=\ell\}$. 
In this paper, we call $C_\ell$ a shell of the code $C$ 
whenever it is non-empty.
Let $p\equiv 1\pmod{8}$, 
$Q_p$ be the binary quadratic residue code of length $p$, and 
$\widetilde{Q}_{p+1}$ be the extended binary quadratic residue code 
of length $p+1$. 
Shells of $\widetilde{Q}_{p+1}$ are known to 
support combinatorial $2$-designs by the 
transitivity argument (see Example~\ref{ex:AutQR}). 
More precisely, 
the set $\mathcal{B}((\widetilde{Q}_{p+1})_{\ell}):=
\{\supp(x)\mid x\in (\widetilde{Q}_{p+1})_\ell\}$
forms the set of blocks of a combinatorial $2$-design. 

In this paper, by computing Jacobi polynomials and 
harmonic weight enumerators of $\widetilde{Q}_{p+1}$, 
we show that $(\widetilde{Q}_{p+1})_\ell \cup (\widetilde{Q}_{p+1}^\perp)_\ell$ is a 3-design 
whenever $(\widetilde{Q}_{p+1})_\ell \cup {(\widetilde{Q}_{p+1}^\perp)}_\ell$ is non-empty. 
We use $\sqcup$ to denote disjoint union and 
a code $C$ is isodual if $C$ and $C^\perp$ are equivalent. 

\begin{thm}\label{thm:main}
Let $C$ be an isodual binary code of length $n$, 
$X:=\{1,\ldots,n\}$, and 
$G=\Aut(C)$. 
Let $\sigma \in S_n$ such that $C^\perp=C^\sigma$. 
Then $G$ acts on $\binom{X}{t}$ and 
we assume that 
$G$ has two orbits: 
\[
\binom{X}{t}=GT_1\sqcup GT_2 
\]
such that $(GT_1)^\sigma=GT_2$. 
Then the following statements hold: 
\begin{enumerate}
\item [(1)]
$J_{C,T}+J_{C^\perp,T}$ is independent 
of the choice of $T$ with $|T|={t}$. 

\item [(2)]
Let $f$ be a harmonic function of a degree $t$, 
which is an invariant of $\Aut(C)$. 
Then we have 
\[
w_{C,f}+w_{C^\perp,f}=0. 
\]

\end{enumerate}
\end{thm}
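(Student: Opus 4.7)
The plan is to combine two ingredients: the transformation law of Jacobi polynomials and harmonic weight enumerators under a coordinate permutation, and the fact that $\sigma$ lies in the normalizer of $G$. Since $\Aut(C)=\Aut(C^{\perp})=G$ while $\Aut(C^{\sigma})=\sigma G\sigma^{-1}$, the hypothesis $C^{\sigma}=C^{\perp}$ forces $\sigma G\sigma^{-1}=G$. Hence $\sigma$ permutes the $G$-orbits on $\binom{X}{t}$; as there are only two, the assumption $(GT_1)^{\sigma}=GT_2$ implies that $\sigma$ swaps them, and its restriction is a bijection $GT_1\to GT_2$, so $|GT_1|=|GT_2|$.

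For part (1), I would invoke two elementary identities: $J_{C,gT}=J_{C,T}$ for every $g\in G$ (because $C^{g}=C$), and $J_{C^{\sigma},T}=J_{C,\sigma^{-1}T}$ (immediate from the definition of $J_{C,T}$). Writing $J_i$ for the common value of $J_{C,T}$ at $T\in GT_i$, for any $T\in GT_1$ one has
\[
J_{C,T}+J_{C^{\perp},T}=J_1+J_{C,\sigma^{-1}T}=J_1+J_2,
\]
since $\sigma^{-1}T\in GT_2$; the analogous computation for $T\in GT_2$ again yields $J_2+J_1$. Hence the sum is independent of $T$.

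For part (2), the harmonic weight enumerator satisfies analogous identities: $w_{C,f}$ is linear in $f$, and $w_{C^{\sigma},f}=w_{C,f\circ\sigma}$. Therefore
\[
w_{C,f}+w_{C^{\perp},f}=w_{C,f+f\circ\sigma},
\]
so it suffices to show that $f+f\circ\sigma$ vanishes on $\binom{X}{t}$. Since $f$ is $G$-invariant, it takes constant values, say $a$ on $GT_1$ and $b$ on $GT_2$; because $\sigma$ swaps the two orbits, $f\circ\sigma$ takes the value $b$ on $GT_1$ and $a$ on $GT_2$. Thus $f+f\circ\sigma\equiv a+b$, and harmonicity of degree $t\ge1$ forces $\sum_{T\in\binom{X}{t}}f(T)=a|GT_1|+b|GT_2|=0$; combined with $|GT_1|=|GT_2|$ this yields $a+b=0$.

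No step looks technically difficult; the one conceptual point to get right is the normalizer fact $\sigma G\sigma^{-1}=G$, from which both the orbit-swapping and the cardinality equality $|GT_1|=|GT_2|$ follow. Once this is in place, the rest is direct manipulation of the definitions of $J_{C,T}$ and $w_{C,f}$, together with the vanishing $\sum_{T}f(T)=0$ satisfied by a harmonic function of positive degree.
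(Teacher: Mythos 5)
Your proof is correct and follows essentially the same route as the paper: part (1) is the identical computation $J_{C,T}+J_{C^\perp,T}=J_{C,T}+J_{C,T^{\sigma^{-1}}}=J_{C,T_1}+J_{C,T_2}$, and part (2) likewise reduces to showing that $f\circ\sigma=-f$ for a $G$-invariant harmonic $f$ of degree $t$, whence $w_{C,f}+w_{C^\perp,f}=w_{C,f}+w_{C,-f}=0$. The only difference is cosmetic: the paper justifies $f\circ\sigma=-f$ by asserting $\Harm_t^{G}=\langle R(T_1)-R(T_2)\rangle$ via the Reynolds operator, whereas you derive $a+b=0$ from $\sum_{T}f(T)=0$ together with $|GT_1|=|GT_2|$ --- a slightly more self-contained justification that also makes explicit the normalizer fact $\sigma G\sigma^{-1}=G$ underlying the orbit swap, which the paper leaves implicit.
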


Applying Theorem~\ref{thm:main}, 
we have the following corollary: 

\begin{cor}\label{cor:main}
Let $p\equiv 1\pmod{8}$ and
$\widetilde{Q}_{p+1}$ be the extended binary quadratic residue code of length $p+1$. 
Then for $\ell\in \NN$, $(\widetilde{Q}_{p+1})_\ell \cup {(\widetilde{Q}_{p+1}^\perp)}_\ell$ is a $3$-design 
whenever $(\widetilde{Q}_{p+1})_\ell \cup {(\widetilde{Q}_{p+1}^\perp)}_\ell$ is non-empty. 
\end{cor}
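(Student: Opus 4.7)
The plan is to apply Theorem~\ref{thm:main}(1) to $C=\widetilde{Q}_{p+1}$ at $t=3$ and then read off the $3$-design property from the resulting Jacobi-polynomial identity. I would first identify the coordinate set $X=\{1,\ldots,p+1\}$ with the projective line $\mathbb{P}^1(\F_p)$ and record two classical facts valid for $p\equiv 1\pmod 8$: the code $\widetilde{Q}_{p+1}$ is isodual, with the conjugating permutation $\sigma\in\mathrm{PGL}(2,p)$ realized by multiplication by any nonsquare $\alpha\in\F_p^*$ (this exchanges the two equivalent binary QR codes and hence satisfies $\widetilde{Q}_{p+1}^{\sigma}=\widetilde{Q}_{p+1}^\perp$); and $\Aut(\widetilde{Q}_{p+1})\supseteq\mathrm{PSL}(2,p)$, which acts $2$-transitively on $X$.

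Next I would verify the two-orbit hypothesis. When $p\equiv 1\pmod 4$, $-1$ is a square in $\F_p$, which has two consequences: the Möbius transformations realizing all of $S_3$ on any given $3$-subset all lie in $\mathrm{PSL}(2,p)$, so the $\mathrm{PSL}$-stabilizer of a $3$-set is the full $S_3$; and an index computation then yields exactly two $\mathrm{PSL}$-orbits on $\binom{X}{3}$. The two orbits can be separated by the class of the discriminant $(a-b)(b-c)(c-a)$ in $\F_p^*/(\F_p^*)^2$, which is well-defined on unordered triples for exactly the same reason ($-1$ being a square). Since multiplication by a nonsquare scales the discriminant by a nonsquare, the specific $\sigma$ above interchanges the two orbits, giving $(GT_1)^{\sigma}=GT_2$ as required by Theorem~\ref{thm:main}.

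With the hypotheses in place, Theorem~\ref{thm:main}(1) yields that $J_{\widetilde{Q}_{p+1},T}+J_{\widetilde{Q}_{p+1}^{\perp},T}$ is independent of $T\in\binom{X}{3}$. Extracting the coefficient that counts codewords of weight $\ell$ whose support contains $T$ then gives
\[
\#\{c\in(\widetilde{Q}_{p+1})_\ell:T\subseteq\supp(c)\}+\#\{c\in(\widetilde{Q}_{p+1}^{\perp})_\ell:T\subseteq\supp(c)\}=\text{const.}
\]
for every $T\in\binom{X}{3}$, which is precisely the defining $3$-design condition for the block set obtained from $(\widetilde{Q}_{p+1})_\ell\cup(\widetilde{Q}_{p+1}^{\perp})_\ell$.

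The step I expect to be the main obstacle is the orbit-swap $(GT_1)^{\sigma}=GT_2$, rather than $\sigma$ preserving each orbit setwise. Counting two $\mathrm{PSL}$-orbits is a quick index calculation, but one has to pin down the discriminant invariant, check that it is well-defined on unordered triples (which is where the arithmetic hypothesis $p\equiv 1\pmod 4$ really enters), and then verify that the particular $\sigma$ witnessing isoduality changes its square class. Once this combinatorial point is settled, the remainder is a direct application of Theorem~\ref{thm:main}(1) together with a routine coefficient reading of the Jacobi polynomial.
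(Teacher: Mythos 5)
Your proposal is correct and follows essentially the same route as the paper: verify the hypotheses of Theorem~\ref{thm:main} for $C=\widetilde{Q}_{p+1}$ with $t=3$ and read off the $3$-design property from the $T$-independence of $J_{C,T}+J_{C^\perp,T}$ as in Remark~\ref{rem:hom}. The only substantive difference is that you justify the two-orbit decomposition of $\binom{X}{3}$ and the orbit swap $(GT_1)^\sigma=GT_2$ explicitly via the square class of the discriminant $(a-b)(b-c)(c-a)$ and need only part (1) of the theorem, whereas the paper delegates these orbit facts to Example~\ref{ex:AutQR} and a citation to Isaacs, and additionally invokes part (2) together with the known $2$-design property of the shells.
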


\begin{rem}
We note that 
$\widetilde{Q}_{p+1}\cap \widetilde{Q}_{p+1}^\perp=\{0,\mathbf{1}\}$ 
(see \cite[Chapter 6, Theorem 70]{Pless}). 
\end{rem}

This paper is organized as follows. 
In Section~\ref{sec:pre}, 
we give definitions and some basic properties of the 
codes, 
combinatorial $t$-designs, 
Jacobi polynomials, and harmonic weight enumerators 
used in this paper.
In Section~\ref{sec:main}, 
we give proofs of Theorem~\ref{thm:main} and 
Corollary~\ref{cor:main}.
Finally, in Section~\ref{sec:rem} 
we give concluding remarks about the 3-design, which was 
recently obtained by Bonnecaze and Sol\'e~\cite{BS}.  

All computer calculations presented in this paper were done using 
{\sc Magma}~\cite{Magma} and {\sc Mathematica}~\cite{Mathematica}. 

\section{Preliminaries}\label{sec:pre}

\subsection{Codes and combinatorial $t$-designs}

A binary linear code $C$ of length $n$ is a linear subspace of $\FF_{2}^{n}$. 
An inner product $({x},{y})$ on $\FF_2^n$ is given 
by
\[
(x,y)=\sum_{i=1}^nx_iy_i,
\]
where $x,y\in \FF_2^n$ with $x=(x_1,x_2,\ldots, x_n)$ and 
$y=(y_1,y_2,\ldots, y_n)$. 
The dual of a linear code $C$ is defined as follows: 
\[
C^{\perp}=\{{y}\in \FF_{2}^{n}\mid ({x},{y}) =0 \text{ for all }{x}\in C\}. 
\]
For $x \in\FF_2^n$,
the weight $\wt(x)$ is the number of its nonzero components. 

Let $C$ be a cyclic code of length $n$. 
Namely, if $(c_1,c_2,\ldots,c_n)\in C$ then 
$(c_{n},c_1,\ldots,c_{n-1})\in C$. 
Then 
$C$ corresponds to an ideal $(g)$ of 
\[
\FF_2[x]/(x^n-1). 
\]
We call $g$ a generator polynomial of $C$. 
For $p\equiv \pm 1\pmod{8}$, 
the binary quadratic residue code $Q_p$ is 
a cyclic code of length $p$, 
which is generated by 
\[
\prod_{\ell \in (\FF_p^\ast)^2}(x-\alpha^\ell), 
\]
where 
$(\FF_p^\ast)^2=\{\ell^2\mid \ell\in \FF_p\}$ and $\alpha$ is a primitive root of order $p$. 
For details of the binary quadratic residue codes, see \cite{MS}. 

Let $C$ be a code of length $n$. 
Then the symmetric group $S_n$ acts on the $n$ coordinates of $C$. 
The automorphism group $\Aut(C)$ of $C$ is the subgroup of $S_n$ such that 
\[
\Aut(C):=\{\sigma\in S_n\mid C^\sigma=C\}, 
\]
where 
\[
C^\sigma
:=
\{(c_{\sigma(1)},\ldots,c_{\sigma(n)})\mid (c_1,\ldots,c_n)\in C\}. 
\]
\begin{ex}\label{ex:AutQR}
Let $\widetilde{Q}_{p+1}$ be the extended binary quadratic 
residue code of length $p+1$: 
\[
\widetilde{Q}_{p+1}
=\{(c_1,\ldots,c_p,c_{p+1})\in \FF_2^{p+1}\mid (c_1,\ldots,c_p)\in Q_p,
\sum_{i=1}^{p+1} c_i=0\}. 
\]
For $p\equiv 1\pmod{8}$, 
the automorphism group of 
$\widetilde{Q}_{p+1}$ is $PSL_2(p)$ 
\cite[Chapter 6, Theorem 6.6.27]{HP}
(see also \cite{{assmus-mattson},{Blahut},{Huffman}}). 
We label the new coordinate $\infty$ and 
assume that the coordinates of 
$\widetilde{Q}_{p+1}$ 
are labeled by $\{0,1,\ldots,p-1,\infty\}$. 
We identify $\{0,1,\ldots,p-1,\infty\}$ with $PG(1,p)$. 
Then the action of $PGL_2(p)$ on $PG(1,p)$ is $3$-transitive 
(see \cite[Propositions 4.6 and 4.8]{BJL}) and 
the action of $PSL_2(p)$ on $PG(1,p)$ 
is not 3-homogeneous 
(see \cite{BR}). 
Indeed, 
let $a$ be a generator of $\FF_p^\ast$. 
Then 
\[
\binom{X}{3}
=PSL_2(p)\{0,1,\infty\}\sqcup PSL_2(p)\{0,a,\infty\}. 
\]
\end{ex}

A combinatorial $t$-$(n,\ell,\lambda)$ design  (or $t$-design for short)
is a pair 
$\mathcal{D}=(\Omega,\mathcal{B})$, where $\Omega$ is a set of points of 
cardinality $n$, and $\mathcal{B}$ is a collection of $\ell$-element subsets
of $\Omega$ called blocks, with the property that any $t$ points are 
contained in precisely $\lambda$ blocks.


The support of a vector 
${x}:=(x_{1}, \dots, x_{n})$, 
$x_{i} \in \GF_{2}$ is 
the set of indices of its nonzero coordinates: 
${\rm supp} ({x}) = \{ i \mid x_{i} \neq 0 \}$\index{$supp (x)$}.
Let $\Omega:=\{1,\ldots,n\}$ and 
$\mathcal{B}(C_\ell):=\{\supp({x})\mid {x}\in C_\ell\}$. 
Then for a code $C$ of length $n$, 
we say that $C_\ell$ is a combinatorial $t$-design if 
$(\Omega,\mathcal{B}(C_\ell))$ is a combinatorial $t$-design. 


The following lemma is easily seen. 

\begin{lem}[{{\cite[Page 3, Proposition 1.4]{CL}}}]\label{lem: divisible}
Let $\lambda(S)$ be the number of blocks containing a given set $S$
of $s$ points in a combinatorial $t$-$(n,\ell,\lambda)$ design, where $0\leq s\leq t$. Then
\[
\lambda(S)\binom{\ell-s}{t-s}
=
\lambda\binom{n-s}{t-s}. 
\]
\end{lem}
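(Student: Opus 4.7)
The plan is to establish this identity by a standard double-counting argument. I would count in two ways the set of pairs $(T,B)$ with $T$ a $t$-subset of $\Omega$ containing $S$ and $B$ a block of $\mathcal{D}$ containing $T$. Since $s\leq t$, such $T$ genuinely exist, and the identity will fall out by evaluating the total in two orders.

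First, I would sum over $T$. Because $\mathcal{D}$ is a $t$-$(n,\ell,\lambda)$ design, each $t$-subset $T$ is contained in exactly $\lambda$ blocks. The $t$-subsets of $\Omega$ containing $S$ are obtained by choosing the remaining $t-s$ points from the $n-s$ points of $\Omega\setminus S$, so there are $\binom{n-s}{t-s}$ such $T$. Hence the number of pairs equals $\lambda\binom{n-s}{t-s}$.

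Next, I would sum over $B$. By definition of $\lambda(S)$, there are exactly $\lambda(S)$ blocks containing $S$. For each such block $B$, the $t$-subsets $T$ with $S\subseteq T\subseteq B$ correspond to choosing $t-s$ further points from the $\ell-s$ points of $B\setminus S$, giving $\binom{\ell-s}{t-s}$ possibilities. Thus the number of pairs equals $\lambda(S)\binom{\ell-s}{t-s}$. Equating the two expressions yields the claim. There is no real obstacle here: this is a classical bookkeeping identity, and the only care needed is to verify that both $|\Omega\setminus S|=n-s$ and $|B\setminus S|=\ell-s$, which follow from $S$ being an $s$-subset of $\Omega$ (respectively, of the block $B$).
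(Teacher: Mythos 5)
Your double-counting argument is correct and complete: counting pairs $(T,B)$ with $S\subseteq T\subseteq B$ and $|T|=t$ in the two orders gives exactly $\lambda\binom{n-s}{t-s}$ and $\lambda(S)\binom{\ell-s}{t-s}$ respectively, and the set-size checks you mention are the only hypotheses needed. The paper itself gives no proof (it labels the lemma as ``easily seen'' and cites Cameron--van Lint), and your argument is precisely the classical one found in that reference, so there is nothing to reconcile.
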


\subsection{Jacobi polynomials}

Let $C$ be a binary code of length $n$ and $T\subset [n]:=\{1,\ldots,n\}$. 
Then the Jacobi polynomial of $C$ with $T$ is defined as follows \cite{Ozeki}:
\[
J_{C,T} (w,z,x,y) :=\sum_{c\in C}w^{m_0(c)} z^{m_1(c)}x^{n_0(c)}y^{n_1(c)}, 
\]
where for $c=(c_1,\ldots,c_n)$, 
\begin{align*}
m_i(c)&=|\{j\in T\mid c_j=i \}|,\\
n_i(c)&=|\{j\in [n]\setminus T\mid c_j=i \}|.
\end{align*}
\begin{rem}\label{rem:hom}

It is easy to see that 
$C_\ell$ is a combinatorial $t$-design 
if and only if 
the coefficient of $z^{t}x^{n-\ell}y^{\ell-t}$ 
in $J_{C,T}$ is independent of the choice of $T$ with $|T|=t$. 
Moreover, for all $\ell\in \NN$, $C_\ell\cup C_\ell^\perp$ is a combinatorial $t$-design 
if and only if 
$J_{C,T}+J_{C^\perp,T}$ is independent 
of the choice of $T$ with $|T|={t}$. 

\end{rem}

\subsection{Harmonic weight enumerators}\label{sec:Har}


In this subsection, we review the concept of 
harmonic weight enumerators.

Let $\Omega=\{1, 2,\ldots,n\}$ be a finite set (which will be the set of coordinates of the code) and 
let $X$ be the set of its subsets,  while for all $k= 0,1,\dots, n$, 
$X_{k}$ is the set of its $k$-subsets.
We denote by $\R X$ and $\R X_k$ the 
real vector spaces spanned by the elements of $X$
and $X_{k}$, respectively.
An element of $\R X_k$ is denoted by
$$f=\sum_{z\in X_k}f(z)z$$
and is identified with the real-valued function on $X_{k}$ given by 
$z \mapsto f(z)$. 

An element $f\in \R X_k$ can be extended to an element $\widetilde{f}\in \R X$ by setting
$$\widetilde{f}(u)=\sum_{z\in X_k, z\subset u}f(z)$$
for all $u \in X$. If an element $g \in \R X$ is equal to $\widetilde{f}$ 
for some $f \in \R X_{k}$, then we say that $g$ has degree $k$. 
The differentiation $\gamma$ is the operator on $\RR X$ defined by linearity from 
$$\gamma(z) =\sum_{y\in X_{k-1},y\subset z}y$$
for all $z\in X_k$ and for all $k=0,1, \ldots, n$, and $\Harm_{k}$ is the kernel of $\gamma$:
$$\Harm_k =\ker(\gamma|_{\R X_k}).$$

\begin{thm}[{{\cite[Theorem 7]{Delsarte}}}]\label{thm:design}
A set $\mathcal{B} \subset X_{m}$ $($where $m \leq n$$)$ is 
the set of blocks of a $t$-design 
if and only if $\sum_{b\in \mathcal{B}}\widetilde{f}(b)=0$ 
for all $f\in \Harm_k$, $1\leq k\leq t$. 

\end{thm}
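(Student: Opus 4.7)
The plan is to recast the condition in the theorem as a linear-algebra statement about the ``shadow'' function on $X_k$ and then prove both implications. Swapping the order of summation gives
\[
\sum_{b\in\mathcal{B}}\widetilde{f}(b)=\sum_{b\in\mathcal{B}}\sum_{z\in X_k,\,z\subset b}f(z)=\sum_{z\in X_k}\lambda_k(z)\,f(z)=\langle\lambda_k,f\rangle,
\]
where $\lambda_k(z):=|\{b\in\mathcal{B}:z\subset b\}|$ and $\langle\cdot,\cdot\rangle$ is the standard inner product on $\R X_k$. Hence the condition in the theorem is equivalent to $\lambda_k\perp\Harm_k$ for every $1\le k\le t$, while $\mathcal{B}$ being a $t$-design is equivalent, via Lemma~\ref{lem: divisible}, to $\lambda_t$ being a constant function on $X_t$.

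For the forward direction, assuming $\mathcal{B}$ is a $t$-design, Lemma~\ref{lem: divisible} makes $\lambda_k$ a constant $c_k$ on $X_k$ for each $0\le k\le t$, so $\langle\lambda_k,f\rangle=c_k\sum_{z\in X_k}f(z)$. The key step is to show $\sum_{z\in X_k}f(z)=0$ whenever $f\in\Harm_k$ and $k\ge1$; I would prove this by summing the defining identity $\gamma(f)(y)=\sum_{z\supset y}f(z)=0$ over all $y\in X_{k-1}$ and observing that each $z\in X_k$ appears in exactly $k$ terms of the resulting double sum, yielding $k\sum_z f(z)=0$.

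For the converse, the task is to upgrade the family of orthogonalities $\lambda_k\perp\Harm_k$ for $k=1,\ldots,t$ to the single statement that $\lambda_t$ is constant. The main tool is the orthogonal decomposition
\[
\R X_t=\bigoplus_{j=0}^{t}\widetilde{\Harm}_j,\qquad\widetilde{\Harm}_j:=\{\widetilde{f}\mid f\in\Harm_j\}\subset\R X_t,
\]
with $\widetilde{\Harm}_0=\R\mathbf{1}$, so it suffices to check that $\lambda_t\perp\widetilde{\Harm}_j$ for each $j\in\{1,\ldots,t\}$. For $f\in\Harm_j$ I would compute
\[
\langle\lambda_t,\widetilde{f}\rangle=\sum_{y\in X_j}f(y)\sum_{z\in X_t,\,z\supset y}\lambda_t(z)=\binom{m-j}{t-j}\langle\lambda_j,f\rangle=0,
\]
the middle step being a double-counting argument in the same spirit as Lemma~\ref{lem: divisible}, and the final equality being the hypothesis at level $j$.

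The main obstacle is the orthogonal decomposition of $\R X_t$ itself: one must verify that the lifting $\widetilde{\cdot}$ is injective on each $\Harm_j$, that the subspaces $\widetilde{\Harm}_j$ are pairwise orthogonal, and that their dimensions sum to $\binom{n}{t}$. Both injectivity and pairwise orthogonality follow from the adjointness of $\gamma$ with $\widetilde{\cdot}$ combined with $\gamma|_{\Harm_j}=0$, while the dimension count reduces to the telescoping identity $\sum_{j=0}^t(\binom{n}{j}-\binom{n}{j-1})=\binom{n}{t}$ once one knows $\dim\Harm_j=\binom{n}{j}-\binom{n}{j-1}$.
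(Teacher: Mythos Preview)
The paper does not prove Theorem~\ref{thm:design}; it is quoted without argument from Delsarte's paper~\cite{Delsarte}, so there is nothing in the paper to compare your proposal against. What you have written is essentially the classical route to Delsarte's result, and the forward direction and the double-counting identity $\sum_{z\supset y}\lambda_t(z)=\binom{m-j}{t-j}\lambda_j(y)$ are both correct and cleanly presented.

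The one place where your sketch is too quick is the orthogonal decomposition $\R X_t=\bigoplus_{j=0}^t\widetilde{\Harm}_j$. Adjointness of $\gamma$ and the one-step lift $\psi$, together with $\gamma|_{\Harm_j}=0$, is not by itself enough to force pairwise orthogonality of the lifted spaces $\widetilde{\Harm}_i$ and $\widetilde{\Harm}_j$ inside $\R X_t$ for $i<j$: you need the commutation relation $\gamma\psi-\psi\gamma=(n-2k)\,\mathrm{id}$ on $\R X_k$ (equivalently, the $\mathfrak{sl}_2$-triple structure on $\bigoplus_k\R X_k$) in order to push $\gamma$'s past $\psi$'s and reduce $\langle\psi^{t-i}f,\psi^{t-j}g\rangle$ to an expression involving $\gamma g=0$. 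The same relation is what gives surjectivity of $\gamma:\R X_j\to\R X_{j-1}$ for $j\le n/2$ and hence the dimension formula $\dim\Harm_j=\binom{n}{j}-\binom{n}{j-1}$; note this formula, and therefore the telescoping dimension count, is only valid in the range $j\le n/2$, so implicitly you are assuming $t\le n/2$. None of this is a genuine obstruction, but it is the actual content of the theorem, and your sentence ``both injectivity and pairwise orthogonality follow from the adjointness of $\gamma$ with $\widetilde{\cdot}$ combined with $\gamma|_{\Harm_j}=0$'' undersells what is required.
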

The symmetric group $S_n$ acts on $\Omega$ and 
the automorphism group $\Aut(\mathcal{B})$ of $\mathcal{B}$ is 
the subgroup of $S_n$ such that 
\[
\Aut(\mathcal{B}):=\{\sigma\in S_n\mid \mathcal{B}^\sigma=\mathcal{B}\}, 
\]
where 
\[
\mathcal{B}^\sigma:=\{ \{\sigma(b_1),\ldots,\sigma(b_m)\}\mid 
\{b_1,\ldots,b_m\}\in \mathcal{B}\}. 
\]
Let $G$ be a subgroup of $\Aut(\mathcal{B})$. 
Then $G$ acts on $\Harm_k$ through the above action and 
we denote by $\Harm_k^{G}$ the set of the invariants of $G$: 
\[
\Harm_k^{G}=\{f\in {\rm Harm}_k\mid f^\sigma=f, \forall \sigma\in G\}, 
\]
where $f^\sigma$ is defined by linearity from 
\[
\{{i_1},\ldots, {i_k}\}^\sigma=\{{\sigma(i_1)},\ldots, {\sigma(i_k)}\}. 
\]
Theorem~\ref{thm:design} can be reinterpreted in terms of $\Harm_k^{G}$. 
\begin{thm}\label{thm:design-aut}
A set 
$\mathcal{B} \subset X_{m}$ $($where $m \leq n$$)$ is 
the set of blocks of a $t$-design 
if and only if $\sum_{b\in \mathcal{B}}\widetilde{f}(b)=0$ 
for all $f\in \Harm_k^{G}$, $1\leq k\leq t$. 

\end{thm}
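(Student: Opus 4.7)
The plan is to prove Theorem~\ref{thm:design-aut} by reducing it to the original Delsarte criterion, Theorem~\ref{thm:design}, via a $G$-averaging argument. The ($\Rightarrow$) direction is automatic: if $\mathcal{B}$ is a $t$-design, then by Theorem~\ref{thm:design} one has $\sum_{b\in\mathcal{B}}\widetilde{f}(b)=0$ for every $f\in\Harm_k$ with $1\leq k\leq t$, and $\Harm_k^G\subset\Harm_k$, so the same holds for $G$-invariant $f$. Hence the content of the theorem is the ($\Leftarrow$) direction, which I will treat below.

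For the nontrivial direction, the strategy is to show that the vanishing assumption, although stated only for $f\in\Harm_k^G$, in fact extends to every $f\in\Harm_k$; Theorem~\ref{thm:design} then finishes the proof. Given an arbitrary $f\in\Harm_k$, I would form the $G$-average
\[
f^G=\frac{1}{|G|}\sum_{\sigma\in G}f^\sigma,
\]
where $f^\sigma$ is defined by the $S_n$-action described before the statement. First I would verify that $f^G\in\Harm_k^G$. The $G$-invariance is clear from the definition, and $f^G\in\Harm_k$ follows from the fact that the differentiation operator $\gamma$ commutes with the $S_n$-action on $\R X$; indeed, both $\gamma$ and the action of any $\sigma\in S_n$ are defined by purely combinatorial operations on subsets, so $\gamma(f^\sigma)=(\gamma f)^\sigma$ and hence $\gamma(f^G)=0$.

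The key step is the identity
\[
\sum_{b\in\mathcal{B}}\widetilde{f^\sigma}(b)=\sum_{b\in\mathcal{B}}\widetilde{f}(b)\qquad (\sigma\in G).
\]
To see this, expand the left-hand side using $\widetilde{f^\sigma}(b)=\sum_{z\in X_k,\,z\subset b}f(\sigma^{-1}(z))$, make the substitution $y=\sigma^{-1}(z)$ (so that $z\subset b$ becomes $y\subset\sigma^{-1}(b)$), and obtain
\[
\sum_{b\in\mathcal{B}}\widetilde{f^\sigma}(b)
=\sum_{b\in\mathcal{B}}\widetilde{f}(\sigma^{-1}(b))
=\sum_{b'\in\sigma^{-1}\mathcal{B}}\widetilde{f}(b')
=\sum_{b'\in\mathcal{B}}\widetilde{f}(b'),
\]
the last equality because $\sigma\in G\leq\Aut(\mathcal{B})$. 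Averaging over $\sigma\in G$ yields $\sum_{b\in\mathcal{B}}\widetilde{f}(b)=\sum_{b\in\mathcal{B}}\widetilde{f^G}(b)$. By hypothesis the right-hand side vanishes, since $f^G\in\Harm_k^G$. Thus $\sum_{b\in\mathcal{B}}\widetilde{f}(b)=0$ for every $f\in\Harm_k$ with $1\leq k\leq t$, and Theorem~\ref{thm:design} implies that $\mathcal{B}$ is the block set of a $t$-design.

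There is no deep obstacle here; the only point requiring care is the bookkeeping around the action of $S_n$, namely that $\gamma$ commutes with the $S_n$-action and that $\widetilde{f^\sigma}=\widetilde{f}\circ\sigma^{-1}$ as functions on $X$. Once these compatibilities are recorded, the averaging argument is forced and Theorem~\ref{thm:design-aut} follows directly from Theorem~\ref{thm:design}.
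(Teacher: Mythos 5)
Your proof is correct and takes essentially the same approach as the paper: both reduce the nontrivial direction to Theorem~\ref{thm:design} by averaging an arbitrary $f\in\Harm_k$ over $G$ and using the $G$-invariance of $\mathcal{B}$ to see that the sum over blocks is unchanged. The paper phrases this via the orbit decomposition $\mathcal{B}=Gx_1\sqcup\cdots\sqcup Gx_s$ and stabilizer counting, whereas you work directly with $\sigma^{-1}\mathcal{B}=\mathcal{B}$ and explicitly record that $\gamma$ commutes with the $S_n$-action; this is only a difference in bookkeeping, and your write-up is if anything slightly more complete on that last point.
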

\begin{proof}

We assume that 
$\mathcal{B} \subset X_{m}$ is a $t$-design. 
Let $G<\Aut(\mathcal{B})$ and 
$f\in \Harm_k^{G}$ ($1\leq k\leq t$). 
Then by Theorem~\ref{thm:design}, 
$\sum_{b\in \mathcal{B}}\widetilde{f}(b)=0$. 

We assume that 
for all $f\in \Harm_k^{G}$ ($1\leq k\leq t$), 
$\sum_{b\in \mathcal{B}}\widetilde{f}(b)=0$. 
Let 
\[
\mathcal{B}=Gx_1\sqcup \cdots \sqcup G{x_s}, 
\]
and $f\in \Harm_k$ ($1\leq k\leq t$). 
Then 
\begin{align*}
\sum_{b\in \mathcal{B}}\widetilde{f}(b)
&=
\sum_{x\in Gx_1}\widetilde{f}(x)+ \cdots + \sum_{x\in G{x_s}}\widetilde{f}(x)\\
&=
\frac{1}{|G_{x_1}|}\sum_{\sigma\in G}\widetilde{f}^{\sigma}(x_1)+ \cdots 
+ \frac{1}{|G_{x_s}|}\sum_{\sigma\in G}\widetilde{f}^{\sigma}(x_s)=0, 
\end{align*}
since for each $i\in \{1,\ldots,s\}$, 
\[
\frac{1}{|G_{x_i}|}\sum_{\sigma\in G}\widetilde{f}^\sigma
\]
is an invariant polynomial. 
\end{proof}

In \cite{Bachoc}, the harmonic weight enumerator associated with a binary linear code $C$ was defined as follows. 
\begin{df}
Let $C$ be a binary code of length $n$ and let $f\in\Harm_{k}$. 
The harmonic weight enumerator associated with $C$ and $f$ is

$$w_{C,f}(x,y)=\sum_{{c}\in C}\widetilde{f}({\supp (c)})x^{n-\wt({c})}y^{\wt({c})}.$$
\end{df}


\begin{rem}\label{rem:hom-2}
It follows from Theorems~\ref{thm:design} and \ref{thm:design-aut} that 
$C_\ell$ is a combinatorial $t$-design if and only if 
the coefficient of $x^{n-\ell}y^\ell$ in $w_{C,f}(x,y)$ vanishes 
for all $f\in \Harm_k^{\Aut(C)}\ (1\leq k\leq t)$. 
Moreover, for all $\ell\in \NN$, $C_\ell\cup C_\ell^\perp$ is a combinatorial $t$-design 
if and only if 
$w_{C,f}+w_{C^\perp,f}=0$ for all $f\in \Harm_k^{\Aut(C)}\ (1\leq k\leq t)$. 
\end{rem}

\section{Proof of Theorem~\ref{thm:main}}\label{sec:main}

In this section, 
we give a proof of Theorem~\ref{thm:main}. 
\begin{proof}[Proof of Theorem~\ref{thm:main}]

\begin{enumerate}
\item [(1)]

We recall that 
$C^{\sigma}=C^\perp$ and $(GT_1)^\sigma=GT_2$. 
We note that 
for all $T\in GT_i\ (i\in\{1,2\})$, $J_{C,T}=J_{C,T_i}$. 
Then for any $T\in \binom{X}{t}$, 
\begin{align*}
J_{C,T}+J_{C^\perp,T}&=J_{C,T}+J_{C^\sigma,T}\\
&=J_{C,T}+J_{C,T^{\sigma^{-1}}}\\
&=J_{C,T_1}+J_{C,T_2}. 
\end{align*}
Hence, $J_{C,T}+J_{C^\perp,T}$ is  independent 
of the choice of $T$ with $|T|={t}$. 

\item [(2)]
Let $G:=\Aut(C)$. 
For $f\in {\rm Harm}_{t}^{G}$, 
$f$ is written as a linear combination of 
$R({T_1})$ and $R({T_2})$, 
where $R$ is the Reynolds operator: for $T\in \binom{X}{t}$, 
\[
R({T})=\frac{1}{|G|}\sum_{\sigma\in G}{T}^{\sigma}. 
\]
Based on the above assumption, $|GT_1|=|GT_2|$. 
Then there exists a constant $c\in \RR$ such that 
\[
{\rm Harm}_{t}^{G}=\langle c(R({T_1})-R({T_2}))\rangle. 
\]
Let $f:=R({T_1})-R({T_2})$. 
Then 
\begin{align*}
w_{C,f}+w_{C^\perp,f}&=w_{C,f}+w_{C^\sigma,f}\\
&=w_{C,f}+w_{C,f^{\sigma^{-1}}}\\
&=w_{C,f}+w_{C,-f}=0.
\end{align*}
\end{enumerate}
This completes the proof of Theorem~\ref{thm:main}. 
\end{proof}
\begin{proof}[Proof of Corollary~\ref{cor:main}]

Let $p\equiv 1\pmod{8}$ and 
$\widetilde{Q}_{p+1}$ be the extended binary quadratic residue code of length $p+1$. 
Then by Example~\ref{ex:AutQR}, 
\[
G:=\Aut(\widetilde{Q}_{p+1})\simeq PSL_2(p). 
\]
Let $X:=\{0,1,\ldots,p-1,\infty\}$. 
Then $G$ acts on $\binom{X}{3}$ and 
by Example~\ref{ex:AutQR}
and \cite[P.235, 8A.8.]{I}, 
we have two orbits: 
\[
\binom{X}{3}=GT_1\sqcup GT_2, 
\]
which satisfy the assumption of Theorem~\ref{thm:main}. 
By Theorem~\ref{thm:main}, 
\[
J_{\widetilde{Q}_{p+1},T}+J_{\widetilde{Q}_{p+1}^\perp,T}
\] 
 is independent 
of the choice of $T$ with $|T|={3}$, 
and for $f\in {\rm Harm}_3^{G}$ 
we have 
\[
w_{\widetilde{Q}_{p+1},f}+w_{\widetilde{Q}_{p+1}^\perp,f}=0. 
\]
Since $(\widetilde{Q}_{p+1})_\ell$ and $(\widetilde{Q}_{p+1}^\perp)_\ell$ 
are 2-designs, 
for $f\in {\rm Harm}_k^{G}\ (1\leq k\leq 2)$, 
we have 
\[
w_{\widetilde{Q}_{p+1},f}+w_{\widetilde{Q}_{p+1}^\perp,f}=0. 
\]
Then by Remark~\ref{rem:hom} or \ref{rem:hom-2}, 
the proof is complete. 
\end{proof}





\section{Quadratic residue code of length 42}\label{sec:rem}

Let $\widetilde{Q}_{42}$ be the extended binary quadratic residue code of length $42$. 
In \cite{BS}, Bonnecaze and Sol\'e 
found a 3-design $(\widetilde{Q}_{42})_{10}$ in $\widetilde{Q}_{42}$. 
They showed the existence of this design by 
electronic calculation and noted that 
this design ``cannot be derived from the Assmus--Mattson Theorem \cite{assmus-mattson}, 
and does not follow by the standard transitivity argument." 

In this section, 
we give Jacobi polynomials and harmonic weight enumerators of 
$\widetilde{Q}_{42}$. 
Then we present an alternative approach to 
the existence of the 3-design in 
$\widetilde{Q}_{42}$. 
Moreover, we show that 
$(\widetilde{Q}_{42})_{\ell}$ is not a $3$-design if $\ell\neq 10$ . 


Let $X:=\{0,1,\ldots,41,\infty\}$. For $T\in \binom{X}{3}$, 
the Jacobi polynomials of $\widetilde{Q}_{42}$ 
are as follows. 
\begin{ex}\label{ex:42-Jac}
Let $\widetilde{Q}_{42}$ be the extended binary quadratic residue code of length 42, 
$X:=\{0,1,\ldots,41,\infty\}$, 
$T\in \binom{X}{3}$, and $G=\Aut(\widetilde{Q}_{42})$. 
Then by the fact that $\FF_{41}^\ast=\langle 6\rangle $ and 
Example~\ref{ex:AutQR}, 
\[
\binom{X}{3}=G\{0, 1,\infty\}\sqcup G\{0, 6,\infty\}
\]
and we have the following. 
\begin{enumerate}
\item [(1)]
If $T_1\in G\{0, 1,\infty\}$, then
\begin{align*}
J_{\widetilde{Q}_{42},T_1}&(w,z,x,y)=w^3x^{39} + 744w^3x^{29}y^{10} + 3756w^3x^{27}y^{12} 
+ 14211w^3x^{25}y^{14} \\
&+ 35703w^3x^{23}y^{16}+ 60172w^3x^{21}y^{18} + 65436w^3x^{19}y^{20} + 48330w^3x^{17}y^{22} \\
&+ 24318w^3x^{15}y^{24} + 7668w^3x^{13}y^{26}+ 1584w^3x^{11}y^{28} + 203w^3x^9y^{30} \\
&+ 18w^3x^7y^{32} + 744w^2x^{30}y^9z + 4827w^2x^{28}y^{11}z+ 22977w^2x^{26}y^{13}z \\
&+71316w^2x^{24}y^{15}z + 147924w^2x^{22}y^{17}z + 195930w^2x^{20}y^{19}z \\
&+ 177630w^2x^{18}y^{21}z +109116w^2x^{16}y^{23}z + 42876w^2x^{14}y^{25}z \\
&+ 11043w^2x^{12}y^{27}z + 1833w^2x^{10}y^{29}z + 216w^2x^8y^{31}z \\
&+216wx^{31}y^8z^2 + 1833wx^{29}y^{10}z^2+ 11043wx^{27}y^{12}z^2 \\
&+ 42876wx^{25}y^{14}z^2 + 109116wx^{23}y^{16}z^2+177630wx^{21}y^{18}z^2 \\
&+ 195930wx^{19}y^{20}z^2 + 147924wx^{17}y^{22}z^2+ 71316wx^{15}y^{24}z^2 \\
&+22977wx^{13}y^{26}z^2 + 4827wx^{11}y^{28}z^2+ 744wx^9y^{30}z^2 \\
&+ 18x^{32}y^7z^3 + 203x^{30}y^9z^3 +1584x^{28}y^{11}z^3 \\
&+ 7668x^{26}y^{13}z^3 + 24318x^{24}y^{15}z^3 + 48330x^{22}y^{17}z^3 \\
&+ 65436x^{20}y^{19}z^3 +60172x^{18}y^{21}z^3 + 35703x^{16}y^{23}z^3 \\
&+ 14211x^{14}y^{25}z^3 + 3756x^{12}y^{27}z^3 + 744x^{10}y^{29}z^3 + y^{39}z^3.
\end{align*}

\item [(2)]
If $T_2\in G\{0, 6,\infty\}$, then
\begin{align*}
J_{\widetilde{Q}_{42},T_2}&(w,z,x,y)=w^3x^{39} + 744w^3x^{29}y^{10} + 3755w^3x^{27}y^{12} 
+ 14220w^3x^{25}y^{14} \\
&+ 35667w^3x^{23}y^{16} + 60256w^3x^{21}y^{18} + 65310w^3x^{19}y^{20} + 48456w^3x^{17}y^{22} \\
&+ 24234w^3x^{15}y^{24}+ 7704w^3x^{13}y^{26} + 1575w^3x^{11}y^{28} + 204w^3x^9y^{30} \\
&+ 18w^3x^7y^{32} + 744w^2x^{30}y^9z + 4830w^2x^{28}y^{11}z + 22950w^2x^{26}y^{13}z\\
 &+ 71424w^2x^{24}y^{15}z + 147672w^2x^{22}y^{17}z + 196308w^2x^{20}y^{19}z \\
&+ 177252w^2x^{18}y^{21}z +109368w^2x^{16}y^{23}z + 42768w^2x^{14}y^{25}z \\
&+ 11070w^2x^{12}y^{27}z + 1830w^2x^{10}y^{29}z+ 216w^2x^8y^{31}z \\
&+ 216wx^{31}y^8z^2 + 1830wx^{29}y^{10}z^2+ 11070wx^{27}y^{12}z^2 \\
&+ 42768wx^{25}y^{14}z^2 + 109368wx^{23}y^{16}z^2 + 177252wx^{21}y^{18}z^2 \\
&+ 196308wx^{19}y^{20}z^2 + 147672wx^{17}y^{22}z^2 + 71424wx^{15}y^{24}z^2\\
&+ 22950wx^{13}y^{26}z^2 + 4830wx^{11}y^{28}z^2 + 744wx^9y^{30}z^2 \\
&+ 18x^{32}y^7z^3 + 204x^{30}y^9z^3+ 1575x^{28}y^{11}z^3 + 7704x^{26}y^{13}z^3 \\
&+ 24234x^{24}y^{15}z^3+ 48456x^{22}y^{17}z^3 + 65310x^{20}y^{19}z^3 \\
&+ 60256x^{18}y^{21}z^3+ 35667x^{16}y^{23}z^3 + 14220x^{14}y^{25}z^3 \\
&+ 3755x^{12}y^{27}z^3 + 744x^{10}y^{29}z^3 + y^{39}z^3. 
\end{align*}
\end{enumerate}
We remark the following: 
\[
J_{\widetilde{Q}_{42},T_1}-J_{\widetilde{Q}_{42},T_2}=x^9 y^9 (x^2 - y^2)^9 (w y - x z)^3. 
\]
\end{ex}

Let $f\in {\rm Harm}_3^{\Aut(\widetilde{Q}_{42})}$. Then 
the harmonic weight enumerators associated with $\widetilde{Q}_{42}$ 
and $f$
are as follows. 
\begin{ex}\label{ex:42-harm}
Let $\widetilde{Q}_{42}$ be the extended binary quadratic residue code of length 42, and 
$f$ be a harmonic function of degree $3$, which is 
an invariant of $\Aut(C)$. 
We remark that $\dim({\rm Harm}_3^{\Aut(\widetilde{Q}_{42})})=1$ and 
$f$ is listed on the homepage of one of the authors \cite{miezaki}. 
Then there exists $c_f$ such that 
\begin{align*}
w_{\widetilde{Q}_{42},f}(x,y)=&c_f(-5740x^{12} y^{12} (x^2 - y^2)^9)\\
=&-5740c_f(x^{30} y^{12} - 9 x^{28} y^{14} + 36 x^{26} y^{16}
- 84 x^{24} y^{18} \\
&+  126 x^{22} y^{20} 
- 126 x^{20} y^{22} + 84 x^{18} y^{24} - 36 x^{16} y^{26} \\
&+ 
 9 x^{14} y^{28} - x^{12} y^{30}). 
\end{align*}
\end{ex}
We have the following as a corollary. 
\begin{cor}\label{cor:main2}
Let $\widetilde{Q}_{42}$ be the extended binary quadratic residue code of length 42. 
Then $(\widetilde{Q}_{42})_{10}$ $($resp.~$(\widetilde{Q}_{42})_{32}$$)$ is a $3$-$(42,10,18)$ design $($resp.~$3$-$(42, 32, 744)$ design$)$. 
Moreover, $(\widetilde{Q}_{42})_{\ell}$ is not a $3$-design if $\ell\neq 10,32$.

\end{cor}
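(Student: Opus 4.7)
The plan is to invoke Remark~\ref{rem:hom} and read everything off the explicit Jacobi polynomials in Example~\ref{ex:42-Jac}. By Example~\ref{ex:AutQR}, the group $G = \Aut(\widetilde{Q}_{42}) \cong PSL_2(41)$ has exactly two orbits on $\binom{X}{3}$, namely $GT_1$ and $GT_2$ with $T_1 = \{0,1,\infty\}$ and $T_2 = \{0,6,\infty\}$; thus $J_{\widetilde{Q}_{42}, T}$ is determined by the $G$-orbit of $T$. Consequently, $(\widetilde{Q}_{42})_\ell$ is a $3$-design precisely when the coefficient of $z^{3} x^{42-\ell} y^{\ell-3}$ is the same in $J_{\widetilde{Q}_{42}, T_1}$ and $J_{\widetilde{Q}_{42}, T_2}$, which reduces the question to inspecting the $z^3$-part of the difference $J_{\widetilde{Q}_{42}, T_1} - J_{\widetilde{Q}_{42}, T_2}$.

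The key step uses the factored identity
\[
J_{\widetilde{Q}_{42},T_1} - J_{\widetilde{Q}_{42},T_2} = x^{9} y^{9} (x^2 - y^2)^{9} (wy - xz)^{3}
\]
from Example~\ref{ex:42-Jac}. Since the $z^3$ coefficient of $(wy - xz)^{3}$ equals $-x^{3}$, the $z^{3}$-part of the difference is
\[
-x^{12} y^{9} (x^2 - y^2)^{9} = -\sum_{j=0}^{9} (-1)^{j} \binom{9}{j} x^{30-2j} y^{9+2j}.
\]
Matching $42 - \ell = 30 - 2j$, this is nonzero on the monomial $x^{42-\ell} y^{\ell-3}$ exactly for $\ell \in \{12, 14, 16, 18, 20, 22, 24, 26, 28, 30\}$ and vanishes otherwise.

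The nonzero-weight shells of $\widetilde{Q}_{42}$ are visible from the $w^{3}$-terms of $J_{\widetilde{Q}_{42}, T_1}$: the set of weights is $\{0, 10, 12, 14, \dots, 30, 32, 42\}$. Combining with the previous step, the only nontrivial shells that form a $3$-design are $\ell = 10$ and $\ell = 32$, while each $\ell \in \{12, 14, \dots, 30\}$ fails to be a $3$-design. To pin down the parameters I would apply Lemma~\ref{lem: divisible} with $s = t = 3$: $\lambda$ equals the number of codewords of the given weight whose support contains the fixed triple $T_1$, which is precisely the coefficient of $w^{0} z^{3} x^{42-\ell} y^{\ell-3}$ in $J_{\widetilde{Q}_{42}, T_1}$. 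Reading these off gives $\lambda = 18$ at $\ell = 10$ and $\lambda = 744$ at $\ell = 32$.

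Virtually all of the work is absorbed into the compact identity for $J_{T_1} - J_{T_2}$ recorded in Example~\ref{ex:42-Jac}, so the only real obstacle is computational: producing that factored form from the code itself, which is a nontrivial but routine \textsc{Magma}/\textsc{Mathematica} calculation. With the identity in hand, the remainder reduces to the elementary monomial bookkeeping above, and the corollary follows.
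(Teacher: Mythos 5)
Your proof is correct and follows the paper's first (Jacobi--polynomial) route: both arguments rest on the two-orbit decomposition of $\binom{X}{3}$ from Example~\ref{ex:AutQR}, the explicit polynomials of Example~\ref{ex:42-Jac}, and the criterion of Remark~\ref{rem:hom}. Two local differences are worth recording. For the negative part ($\ell\neq 10,32$), the paper switches to the harmonic weight enumerator of Example~\ref{ex:42-harm} together with Theorem~\ref{thm:design-aut}, whereas you stay with the Jacobi difference and expand its $w^0z^3$-part $-x^{12}y^{9}(x^2-y^2)^{9}$; the two computations carry the same information (indeed $w_{\widetilde{Q}_{42},f}$ is a scalar multiple of $x^{12}y^{12}(x^2-y^2)^{9}$, supported on exactly the same weights $12,14,\dots,30$), so this is a matter of taste. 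For the parameters, the paper computes $|(\widetilde{Q}_{42})_{10}|=1722$ with \textsc{Magma} and invokes Lemma~\ref{lem: divisible}, while you read $\lambda$ off directly as the coefficient of $w^{0}z^{3}x^{42-\ell}y^{\ell-3}$ in $J_{\widetilde{Q}_{42},T_1}$ (namely $18$ at $\ell=10$ and $744$ at $\ell=32$); given that these coefficients count exactly the weight-$\ell$ codewords whose support contains $T_1$, your route is self-contained from the displayed polynomials and is, if anything, cleaner. One cosmetic caveat: the weight-$42$ shell (the all-ones word) does not appear among the $w^{3}$-terms but only as $y^{39}z^{3}$, so ``reading the weights off the $w^{3}$-terms'' needs that one extra glance; you do list $42$ and correctly set it aside as trivial, so nothing is lost.
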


\begin{proof}[Proof of Corollary~\ref{cor:main2}]
First, we show that $(\widetilde{Q}_{42})_{10}$ is a $3$-design. 
We give two proofs. 
The first proof relies on the properties of Jacobi polynomials. 
By Example~\ref{ex:42-Jac}, 
the coefficient of $z^{3} x^{32} y^{7}$ in 
$J_{\widetilde{Q}_{42},T_1}-J_{\widetilde{Q}_{42},T_2}$ is zero. 
Hence, 
by Remark~\ref{rem:hom}, 
$(\widetilde{Q}_{42})_{10}$ is a $3$-design. 

The second proof relies on the properties of harmonic weight enumerators. 
By Example~\ref{ex:42-harm}, 
the coefficient of $x^{32} y^{10}$ in 
$w_{\widetilde{Q}_{42},f}$ is zero. 
Hence, 
by Remark~\ref{rem:hom-2}, 
$(\widetilde{Q}_{42})_{10}$ is a $3$-design. 

Second, we show that $(\widetilde{Q}_{42})_{10}$ is a $3$-$(42,10,18)$ design. 
Let $(\widetilde{Q}_{42})_{10}$ be a $t$-$(42,10, \lambda_{t})$ design 
with $t \geq 3$. 
By using {\sc Magma}, we have $|(\widetilde{Q}_{42})_{10}|=1722$. 
By Lemma~\ref{lem: divisible}, we have $\lambda_{3}=18$. 
Assuming that $(\widetilde{Q}_{42})_{10}$ is a $4$-design, 
we have 
\[
\lambda_{4}=\frac{42}{13}
\] 
by Lemma~\ref{lem: divisible}, which is a contradiction. 
Hence, $(\widetilde{Q}_{42})_{10}$ is a $3$-$(42,10,18)$ design.


Third, we show that for $\ell\neq 10,32$, 
$(\widetilde{Q}_{42})_{\ell}$ is not a $3$-design 
whenever $(\widetilde{Q}_{42})_{\ell}$ is non-empty. 
By Example~\ref{ex:42-harm}, 
the coefficient of $x^{42-\ell} y^{\ell}$ in 
$w_{\widetilde{Q}_{42},f}$ is non-zero. 
Hence, by Theorem~\ref{thm:design-aut}, 
$(\widetilde{Q}_{42})_{\ell}$ is not a $3$-design. 
\end{proof}

\begin{rem}
We introduce the following notations: 
\begin{align*}
\delta(C)&:=\max\{t\in \mathbb{N}\mid \forall w, 
C_{w} \mbox{ is a } t\mbox{-design}\};\\ 
s(C)&:=\max\{t\in \mathbb{N}\mid \exists w \mbox{ s.t.~} 
C_{w} \mbox{ is a } t\mbox{-design}\}.
\end{align*}
We remark that $\delta(C) \leq s(C)$ holds, and 
we consider the possible occurrence of $\delta(C)<s(C)$.
In \cite{MN-TEC}, the second and fourth named authors gave 
the first nontrivial examples of a code 
that supports combinatorial $t$-designs for all weights 
obtained from the Assmus--Mattson theorem \cite{assmus-mattson} and 
that supports $t'$-designs for some weights with some $t'>t$ 
(see also \cite{{BS},{Dillion-Schatz},{MMN},{mn-typeI}}). 



By Corollary~\ref{cor:main2}, we have 
\[
2=\delta(\widetilde{Q}_{42})<s(\widetilde{Q}_{42})=3. 
\]
More examples are given in \cite{Ishikawa}. 
\end{rem}

\section*{Acknowledgments}
The first and second named authors thank Reina Ishikawa 
for helpful discussions on this research. 
The second and third named authors were supported 
by Japan Society for Promotion of Science KAKENHI (20K03527, 22K03277).


\end{document}